\theoremstyle{plain}
\newtheorem{theorem}{Theorem}[section]
\newtheorem{lemma}[theorem]{Lemma}
\newtheorem{mydef*}{Definition*}
\theoremstyle{definition}
\newtheorem{definition}[theorem]{Definition}
\theoremstyle{remark}
\begin{document}
\title{SOCLE OF A HAMILTONIAN GROUP}
\author{
\name{Sourav Koner and Biswajit Mitra}\thanks{CONTACT Email:  harakrishnaranusourav@gmail.com \& bmitra@math.buruniv.ac.in}
\affil{Department of Mathematics, The University of Burdwan, India 713104 \& Department of Mathematics, The University of Burdwan, India 713104}}

\maketitle
\begin{abstract}
The socle of a group $G$ is the subgroup generated by all minimal normal subgroups of $G$. In this short note, we determine the socle of a Hamiltonian group explicitly.
\end{abstract}
 
\section{Introduction}

A group is called a Dedekind group if all of its subgroups are normal (see [\cite{RD97}]). Therefore, all abelian groups are Dedekind groups. A non-abelian Dedekind group is called a Hamiltonian group. The smallest example of Hamiltonian group is $Q_{8}$, the quaternion group of order $8$. In both finite and infinite order cases, Dedekind and Baer have shown that every Hamiltonian group is a direct product of $Q_{8}$, a group of exponent $2$, and a torsion abelian group in which all elements have odd order (see [\cite{BR33}], [\cite{HM99}]). In particular, Hamiltonian groups are locally finite, nilpotent of class $2$, and solvable of length $2$. In the present paper we prove the following theorem:

\begin{theorem}\label{srmkrsna}
If $\mathbbm{H} = Q_{8} \oplus B \oplus D$, where $B$ is an elementary abelian $2$-group, and $D$ is a torsion abelian group with all elements of odd order, then $\mathrm{Soc}(\mathbbm{H}) \simeq \mathbb{Z}_{2} \oplus B \oplus \mathbbm{P}(D)$, where $\mathbbm{P}(D)$ is the group generated by all the elements of $D$ that have prime orders.
\end{theorem}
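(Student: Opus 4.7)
The plan is to exploit the defining property of Hamiltonian groups—that every subgroup is normal—so that the minimal normal subgroups of $\mathbbm{H}$ coincide with its minimal subgroups, namely the cyclic subgroups of prime order. Consequently, $\mathrm{Soc}(\mathbbm{H})$ is just the subgroup generated by all prime-order elements of $\mathbbm{H}$.

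Next, I would write a general element as $(q,b,d)\in Q_{8}\oplus B\oplus D$ and, for each prime $p$, characterise those triples of order exactly $p$. For $p=2$, the condition $d^{2}=e$ forces $d=e$ (since $D$ has only odd-order elements), while $q$ is confined to $\{\pm 1\}$ and $b$ is arbitrary in $B$. For odd $p$, both $q$ and $b$ must be trivial (because $Q_{8}$ has no nontrivial odd-order elements and $b^{p}=e$ together with $b^{2}=e$ forces $b=e$), while $d$ ranges over elements of order dividing $p$ in $D$.

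Joining these contributions, the order-$2$ elements of $\mathbbm{H}$ generate $\langle -1\rangle\oplus B\oplus\{e\}$, and as $p$ ranges over odd primes the order-$p$ elements generate $\{e\}\oplus\{e\}\oplus\mathbbm{P}(D)$ by the very definition of $\mathbbm{P}(D)$. Taking the join yields
\[
\mathrm{Soc}(\mathbbm{H}) \;=\; \langle -1\rangle\oplus B\oplus\mathbbm{P}(D) \;\simeq\; \mathbb{Z}_{2}\oplus B\oplus \mathbbm{P}(D),
\]
which is the claimed isomorphism.

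The only delicate point is the accounting for order-$2$ elements: $\mathbbm{H}$ contains many ``diagonal'' cyclic subgroups of order $2$ of the form $\langle(-1,b,e)\rangle$, and one must verify that they contribute nothing beyond $\langle-1\rangle\oplus B$. This relies crucially on the fact that $Q_{8}$ has a unique subgroup of order $2$ (its centre $\{\pm 1\}$), so every such diagonal already sits inside $\langle -1\rangle\oplus B$; the rest of the argument is bookkeeping.
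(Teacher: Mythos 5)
Your proof is correct, but it takes a genuinely different and more elementary route than the paper. You use the defining property of Dedekind groups directly: since every subgroup of $\mathbbm{H}$ is normal, the minimal normal subgroups are exactly the minimal nontrivial subgroups, i.e.\ the cyclic subgroups of prime order, so $\mathrm{Soc}(\mathbbm{H})$ is generated by the prime-order elements of $\mathbbm{H}$; the rest is an order computation in the direct sum (order $2$ forces the $D$-coordinate trivial, odd prime order forces the $Q_{8}$- and $B$-coordinates trivial, and the unique involution $-1$ of $Q_{8}$ accounts for the diagonal order-$2$ subgroups). The paper instead develops a small theory of proper essential subgroups: it defines $\delta(G)$ as the intersection of all proper essential subgroups, proves that $\delta$ commutes with direct sums, that $\delta(G)=\mathbbm{P}(G)$ for torsion groups possessing a proper essential subgroup, and that $\delta(\mathbbm{H})=\mathrm{Soc}(\mathbbm{H})$ for Hamiltonian $\mathbbm{H}$, before assembling these into the theorem. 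Your argument is shorter and self-contained, at the cost of not producing the reusable machinery ($\delta$, its compatibility with direct sums, and the characterization $\delta=\mathbbm{P}$ on torsion groups) that constitutes most of the paper's content; conversely, the paper's route never needs to classify the prime-order elements of $\mathbbm{H}$ explicitly, since the factor-by-factor computation of $\delta(Q_{8})$, $\delta(B)$, $\delta(D)$ does that work for it.
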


For a finite solvable group $G$, the socle is a product of elementrary abelian $p$-groups for a collection of primes dividing the order of $G$. However, this may not include all primes that divide the order of $G$. This result follows from a well-known fact that the socle of a finite nilpotent group is a product of elementary abelian $p$-groups for the collection of primes dividing the order of the group. The main objective of above stated theorem is that it explicitly determines the socle of a Hamiltonian group. We prove theorem \eqref{srmkrsna} shortly, before that we develop a few facts about proper essential subgroups (see [\cite{PG70}]).

\begin{definition}\label{madurga}
A proper nontrivial subgroup $E$ of a group $G$ is said to be proper essential in $G$ if $E \cap H \neq \{1_{G}\}$ for every nontrivial subgroup $H$ of $G$. \end{definition}

Hereafter, we write \emph{$E$ is a proper essential subgroup of $G$} or \emph{$E$ is proper essential in $G$} if $E$ satisfies definition \eqref{madurga}.

\section{Proper essential subgroups}

\begin{theorem}\label{maa}
Let $\{G_{\omega}\}_{\omega \in \Lambda}$ be a family of groups indexed by a nonempty set $\Lambda$. Then $\bigoplus_{\omega \in \Lambda} G_{\omega}$ has a proper essential subgroup if and only if some $G_{\omega}$ has a proper essential subgroup. 
\end{theorem}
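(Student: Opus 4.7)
The plan is to prove both implications by producing an explicit candidate for the proper essential subgroup in each direction and verifying the three defining conditions (nontrivial, proper, and having nontrivial intersection with every nontrivial subgroup).

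For the ``if'' direction, suppose $E_{\omega_0}$ is a proper essential subgroup of some $G_{\omega_0}$. Writing $G := \bigoplus_{\omega \in \Lambda} G_\omega$, I would consider
\[ E := E_{\omega_0} \oplus \bigoplus_{\omega \neq \omega_0} G_\omega \leq G. \]
Nontriviality and properness of $E$ in $G$ are inherited directly from the corresponding properties of $E_{\omega_0}$ in $G_{\omega_0}$. To check essentiality, I would take an arbitrary nontrivial subgroup $H \leq G$ and examine the projection $\pi_{\omega_0}(H) \leq G_{\omega_0}$. If $\pi_{\omega_0}(H) = \{1_{G_{\omega_0}}\}$, then $H \subseteq \bigoplus_{\omega \neq \omega_0} G_\omega \subseteq E$, so $H \cap E = H \neq \{1_G\}$. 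Otherwise, essentiality of $E_{\omega_0}$ in $G_{\omega_0}$ yields a nontrivial $g \in \pi_{\omega_0}(H) \cap E_{\omega_0}$, and any preimage $h \in H$ with $h_{\omega_0} = g$ lies in $H \cap E$ and is nontrivial.

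For the ``only if'' direction, suppose $E$ is a proper essential subgroup of $G$. My candidate is $E \cap G_{\omega_0}$ for a suitably chosen index $\omega_0$, with each $G_\omega$ identified with its canonical image in $G$. The key step is the observation that $E$ being proper forces $E \cap G_{\omega_0} \neq G_{\omega_0}$ for some $\omega_0$; otherwise every $G_\omega$ would be contained in $E$, and since the $G_\omega$ generate $G$ we would obtain $E = G$, a contradiction. For this $\omega_0$ the group $G_{\omega_0}$ must be nontrivial, so essentiality of $E$ in $G$ (applied to $G_{\omega_0}$ and to its arbitrary nontrivial subgroups) immediately gives both that $E \cap G_{\omega_0}$ is nontrivial and that it meets every nontrivial subgroup of $G_{\omega_0}$ nontrivially; hence it is proper essential in $G_{\omega_0}$.

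The main obstacle is really just selecting the correct candidate in each direction; once chosen, the verifications are routine. In particular, the tempting shortcut of embedding $E_{\omega_0}$ alone into the $\omega_0$-slot would fail essentiality, because that subgroup would have trivial intersection with the other summands $G_\omega$; filling in the complementary $G_\omega$'s is precisely what repairs the essentiality globally.
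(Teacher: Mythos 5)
Your proposal is correct and follows essentially the same route as the paper: the same candidate subgroup $E_{\omega_{0}} \oplus \bigoplus_{\omega \neq \omega_{0}} G_{\omega}$ in the forward direction, and the same dichotomy ($E \cap G_{\omega}$ is proper essential in $G_{\omega}$ or $G_{\omega} \subseteq E$) in the converse. The only cosmetic difference is that for essentiality you pass to the projection $\pi_{\omega_{0}}(H)$ where the paper uses the cyclic subgroup generated by the $\omega_{0}$-component of a single nontrivial element; both reduce to applying essentiality of $E_{\omega_{0}}$ to a nontrivial subgroup of $G_{\omega_{0}}$.
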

\begin{proof}
Let $G = \bigoplus_{\omega \in \Lambda} G_{\omega}$ and $\omega_{0} \in \Lambda$ be such that $G_{\omega_{0}}$ has a proper essential subgroup $E_{\omega_{0}}$. Let $X = \bigoplus_{\omega \in \Lambda} X_{\omega}$, where $X_{\omega_{0}} = E_{\omega_{0}}$ and $X_{\omega} = G_{\omega}$ if $\omega \neq \omega_{0}$. We claim that $X$ is a proper essential subgroup of $G$. Let $N$ be any nontrivial subgroup of $G$ and let $(n_{\omega})_{\omega \in \Lambda}$ be a nonzero element of $N$. Clearly, $(n_{\omega})_{\omega \in \Lambda} \in X$ if $n_{\omega_{0}} = 1_{G_{\omega_{0}}}$. If $n_{\omega_{0}} \neq 1_{G_{\omega_{0}}}$, consider the cyclic group $\langle n_{\omega_{0}} \rangle$. As $E_{\omega_{0}} \cap \langle n_{\omega_{0}} \rangle \neq \{1_{G_{\omega_{0}}}\}$, we conclude that $X$ is a proper essential subgroup of $G$. 

For the converse, assume that $G$ has a proper essential subgroup $E$ but $G_{\omega}$ does not have a proper essential subgroup for all $\omega \in \Lambda$. Observe that, for all $\omega \in \Lambda$, we have either $E \cap G_{\omega}$ is proper essential in $G_{\omega}$ or $G_{\omega} \subseteq E$. As $G_{\omega}$ does not have any proper essential subgroup, it must be that $G_{\omega} \subseteq E$ for all $\omega \in \Lambda$, that is, $G \subseteq E$, a contradiction. \end{proof}

\begin{definition}
Let $G$ be a group. If $G$ has a proper essential subgroup, we define $\delta(G)$ to be the intersection of all proper essential subgroup of $G$, and $G$ otherwise.  
\end{definition}

Observe that $\delta(G)$ is a characteristic subgroup of $G$ and hence is normal in $G$. Also, we have $\delta(G) = G$ if $G$ is a finite simple group, because, if $G$ has a proper essential subgroup, then $\delta(G)$ is a proper essential, therefore, proper non-trivial characteristic subgroup  of $G$, a contradiction. Now, if $G$ is a finite group such that $\mathrm{Soc}(G) = G$, then $G = \bigoplus_{\alpha = 1}^{n} S_{\alpha}$, where each $S_{\alpha}$ is a simple group for $1 \leq \alpha \leq n$. Thus, applying theorem \eqref{maa} we can conclude that $G$ does not contain proper essential subgroup. In the following theorem, we establish that $\delta$ commutes with the direct sum. 

\begin{theorem}\label{ons}
If $\{G_{\omega}\}_{\omega \in \Lambda}$ be a family of groups indexed by a nonempty set $\Lambda$, then $\delta\big(\bigoplus_{\omega \in \Lambda} G_{\omega}\big) = \bigoplus_{\omega \in \Lambda} \delta(G_{\omega})$.
\end{theorem}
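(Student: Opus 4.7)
I would split based on whether the summands admit proper essential subgroups at all. If no $G_\omega$ has a proper essential subgroup, then Theorem~\ref{maa} tells us that $G := \bigoplus_{\omega \in \Lambda} G_\omega$ has none either, so by definition both $\delta(G)$ and $\bigoplus_{\omega \in \Lambda}\delta(G_\omega)$ equal $G$. This reduces the problem immediately to the nontrivial case in which some factor, and hence $G$ itself, possesses a proper essential subgroup, so that $\delta(G)$ is defined as the intersection of all proper essential subgroups.

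For the inclusion $\bigoplus_{\omega \in \Lambda}\delta(G_\omega) \subseteq \delta(G)$, my plan is to fix an arbitrary proper essential subgroup $E$ of $G$ and exploit the dichotomy that appeared in the converse half of Theorem~\ref{maa}: for every $\omega$, either $E \cap G_\omega$ is proper essential in $G_\omega$ or $G_\omega \subseteq E$. In the first case $\delta(G_\omega) \subseteq E \cap G_\omega \subseteq E$, and in the second $\delta(G_\omega) \subseteq G_\omega \subseteq E$. Thus $E$ contains every $\delta(G_\omega)$, and since $E$ is a subgroup it contains the subgroup they generate, namely $\bigoplus_{\omega \in \Lambda} \delta(G_\omega)$. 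Intersecting over all such $E$ gives the inclusion.

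For the reverse inclusion I would argue coordinate-wise via the canonical projections $\pi_{\omega_0}\colon G \to G_{\omega_0}$. Fix $\omega_0 \in \Lambda$. If $G_{\omega_0}$ has no proper essential subgroup, then $\delta(G_{\omega_0}) = G_{\omega_0}$ and there is nothing to verify on that coordinate. Otherwise, for every proper essential subgroup $F$ of $G_{\omega_0}$, the forward half of Theorem~\ref{maa} (applied with $X_{\omega_0}=F$ and $X_\omega = G_\omega$ for $\omega \neq \omega_0$) shows that $X_F := F \oplus \bigoplus_{\omega \neq \omega_0} G_\omega$ is proper essential in $G$. Hence $\delta(G) \subseteq X_F$, and projecting to the $\omega_0$-th coordinate yields $\pi_{\omega_0}(\delta(G)) \subseteq F$. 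Intersecting over all such $F$ gives $\pi_{\omega_0}(\delta(G)) \subseteq \delta(G_{\omega_0})$, and since $\omega_0$ was arbitrary, every element of $\delta(G)$ has each coordinate in the corresponding $\delta(G_\omega)$, placing it in $\bigoplus_{\omega \in \Lambda}\delta(G_\omega)$.

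The main obstacle, and the step I would write with most care, is the reverse inclusion: the whole argument turns on Theorem~\ref{maa} actually promoting a proper essential subgroup of a single factor to a genuine proper essential subgroup of $G$, so that the intersection defining $\delta(G)$ really is driven down to $\delta(G_{\omega_0})$ on each coordinate. The forward inclusion, by contrast, is a routine application of the dichotomy already extracted in the proof of Theorem~\ref{maa}.
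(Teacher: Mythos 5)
Your proposal is correct and follows essentially the same route as the paper: one inclusion via the dichotomy that for a proper essential $E \leq G$ each $E \cap G_\omega$ is either proper essential in $G_\omega$ or all of $G_\omega$, and the other via the subgroups $X_F = F \oplus \bigoplus_{\omega \neq \omega_0} G_\omega$ furnished by Theorem~\ref{maa}. The only difference is organizational — the paper splits into three cases according to which factors admit proper essential subgroups, whereas you handle that distinction inline coordinate by coordinate.
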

\begin{proof}
Let $G = \bigoplus_{\omega \in \Lambda} G_{\omega}$, $G^{\prime} = \bigoplus_{\omega \in \Lambda} \delta(G_{\omega})$, $\mathcal{T} = \{\omega \in \Lambda \mid \delta(G_{\omega}) = G_{\omega}\}$ and $\mathcal{T^{\prime}} = \{\omega \in \Lambda \mid \delta(G_{\omega}) \neq G_{\omega}\}$. Clearly the sets $\mathcal{T}$ and $\mathcal{T^{\prime}}$ are disjoint and $\mathcal{T} \cup \mathcal{T^{\prime}} = \Lambda$. We have the following cases: $(a)$ $\mathcal{T} \neq \emptyset$, $\mathcal{T^{\prime}} = \emptyset$; $(b)$ $\mathcal{T} = \emptyset$, $\mathcal{T^{\prime}} \neq \emptyset$; $(c)$ $\mathcal{T} \neq \emptyset$, $\mathcal{T^{\prime}} \neq \emptyset$. 

$(a):$ Since $G$ does not have any proper essential subgroup (see theorem \eqref{maa}), we conclude that $\delta(G) = G^{\prime}$.

$(b):$ Let $(g_{\omega})_{\omega \in \Lambda} \in \delta(G)$, $\tau \in \Lambda$ be any element and $E_{\tau}$ be any proper essential subgroup of $G_{\tau}$. Consider $X = \bigoplus_{\omega \in \Lambda} X_{\omega}$, where $X_{\omega} = G_{\omega}$ for $\omega \neq \tau$ and $X_{\omega} = E_{\tau}$ for $\omega = \tau$. Since $X$ is proper essential
in $G$, we have $g_{\tau} \in E_{\tau}$. As $E_{\tau}$ was arbitrary we get $g_{\tau} \in \delta(G_{\tau})$. But $\tau$ was arbitrarily chosen as well. Hence, $(g_{\omega})_{\omega \in \Lambda} \in G^{\prime}$, that is, $\delta(G) \subseteq G^{\prime}$. Now, let $E$ be any proper essential subgroup of $G$. Observe that for any $\omega \in \Lambda$ we have either $E \cap G_{\omega}$ is proper essential in $G_{\omega}$ or $G_{\omega} \subseteq E$. But either of the cases give us $\delta(G_{\omega}) \subseteq E$. As $\omega$ and $E$ were arbitrary, we get that $G^{\prime} \subseteq \delta(G)$. 

$(c):$ Let $(g_{\omega})_{\omega \in \Lambda} \in \delta(G)$, $\tau \in \mathcal{T^{\prime}}$ and $E_{\tau}$ be any proper essential subgroup of $G_{\tau}$. Consider $X = \bigoplus_{\omega \in \Lambda} X_{\omega}$, where $X_{\omega} = G_{\omega}$ for $\omega \in \Lambda \setminus \{\tau\}$ and $X_{\tau} = E_{\tau}$. Since $X$ is proper essential in $G$, we have $g_{\tau} \in E_{\tau}$. As $\tau$ was arbitrary in $\mathcal{T^{\prime}}$, we get that $g_{\tau} \in \delta(G_{\tau})$ for all $\tau \in \mathcal{T^{\prime}}$, that is, $\delta(G) \subseteq G^{\prime}$. Now, let $E$ be any proper essential subgroup of $G$. Observe that for any $\omega \in \mathcal{T^{\prime}}$ we have either $E \cap G_{\omega}$ is proper essential in $G_{\omega}$ or $G_{\omega} \subseteq E$. But either of the cases give us $\delta(G_{\omega}) \subseteq E$. As $\omega$ was arbitrary in $\mathcal{T^{\prime}}$ and $G_{\lambda} \subseteq E$ for all $\lambda \in \mathcal{T}$, we get that $G^{\prime} \subseteq \delta(G)$.  
\end{proof}

\begin{lemma}\label{spmkj}
If a torsion group $G$ has a proper essential subgroup, then $\delta(G)$ is proper essential in $G$. Moreover, we have $\delta(G) = \mathbbm{P}(G)$, where $\mathbbm{P}(G)$ is the group generated by all the elements of $G$ that have prime orders.
\end{lemma}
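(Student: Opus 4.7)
The plan is to prove the two inclusions $\mathbbm{P}(G)\subseteq\delta(G)$ and $\delta(G)\subseteq\mathbbm{P}(G)$, and to extract the proper essentiality of $\delta(G)$ as a byproduct of the second inclusion. The single key observation driving everything is that a cyclic group of prime order is simple, so essentiality forces total containment on such a subgroup.

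First, I would show $\mathbbm{P}(G)\subseteq\delta(G)$. Let $g\in G$ have prime order $p$ and let $E$ be any proper essential subgroup of $G$. The subgroup $\langle g\rangle\cong\mathbb{Z}_p$ has no nontrivial proper subgroups, so the non-triviality of $E\cap\langle g\rangle$ forces $E\cap\langle g\rangle=\langle g\rangle$, i.e., $g\in E$. Ranging over all proper essential $E$ gives $g\in\delta(G)$, and therefore the subgroup $\mathbbm{P}(G)$ generated by all prime-order elements lies in $\delta(G)$.

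Next, I would verify that $\mathbbm{P}(G)$ itself is proper essential in $G$. Since $G$ is torsion and nontrivial (it possesses a proper essential subgroup by hypothesis), any nontrivial $x$ has finite order $n$ and an appropriate power $x^{n/p}$ has prime order, so $\mathbbm{P}(G)\neq\{1_G\}$. The same trick applied inside any nontrivial subgroup $H$ produces a prime-order element of $H$ lying in $H\cap\mathbbm{P}(G)$, giving essentiality. For properness, I would use the chain $\mathbbm{P}(G)\subseteq\delta(G)\subseteq E\subsetneq G$, where $E$ is the proper essential subgroup guaranteed by hypothesis. Once $\mathbbm{P}(G)$ is known to be proper essential, the reverse inclusion $\delta(G)\subseteq\mathbbm{P}(G)$ is immediate from the definition of $\delta$ as the intersection of all proper essential subgroups. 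Combining the two inclusions yields $\delta(G)=\mathbbm{P}(G)$, which was just shown to be proper essential.

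There is no serious obstacle: all the substance is packed into the observation that prime-order cyclic subgroups are simple, and the rest follows routinely from the torsion hypothesis. The one place to be careful is the argument for properness of $\mathbbm{P}(G)$, where it would be circular to invoke $\mathbbm{P}(G)$'s membership in the intersection defining $\delta(G)$ before having shown it is proper essential; the chain $\mathbbm{P}(G)\subseteq\delta(G)\subseteq E\subsetneq G$ sidesteps this by using only the already-established inclusion into $\delta(G)$ together with the existence of a concrete proper essential $E$.
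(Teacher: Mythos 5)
Your proof is correct, but it is organized differently from the paper's, and the difference is worth noting. The paper first proves that $\delta(G)$ is proper essential \emph{directly}, by contradiction: if some nontrivial $H$ met $\delta(G)$ trivially, then picking $h\in H$ of finite order $n$ one finds, for each $1\le i\le n-1$, a proper essential $E_i$ omitting $h^i$, and the finite intersection $\bigcap_{i=1}^{n-1}E_i$ --- again proper essential --- would meet $\langle h\rangle$ trivially, a contradiction. Only afterwards does it identify $\delta(G)$ with $\mathbbm{P}(G)$ by the same two inclusions you use. You instead reverse the order: you establish $\mathbbm{P}(G)\subseteq\delta(G)$ via simplicity of $\mathbb{Z}_p$, show $\mathbbm{P}(G)$ is itself proper essential (nontrivial and essential because every nontrivial subgroup of a torsion group contains a prime-order element; proper via the chain $\mathbbm{P}(G)\subseteq\delta(G)\subseteq E\subsetneq G$), and then read off both $\delta(G)\subseteq\mathbbm{P}(G)$ and the essentiality of $\delta(G)$ for free. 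Your route is leaner: it dispenses entirely with the contradiction argument and with the auxiliary fact that a finite intersection of proper essential subgroups is proper essential, which the paper uses without proof. Note also that the paper's own second paragraph silently asserts that $\mathbbm{P}(G)$ is proper essential, which is exactly the step you make explicit; in that sense your write-up fills a small gap in the paper's argument while making its first paragraph redundant. Both arguments use the torsion hypothesis, but in different places: the paper uses it to bound the number of $E_i$'s needed, you use it to guarantee prime-order elements in every nontrivial subgroup. You were also right to flag the potential circularity in proving properness of $\mathbbm{P}(G)$; your sandwich through a concrete proper essential $E$ handles it cleanly.
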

\begin{proof}
Assume on the contrary that $\delta(G)$ is not proper essential in $G$. Therefore, there exists a subgroup $H \neq \{1_{G}\}$ of $G$ such that $H \cap \delta(G) = \{1_{G}\}$. If $h \in H \setminus \{1_{G}\}$ is such that $|h| = n$, then $\langle h \rangle \cap \delta(G) = \{1_{G}\}$. Thus, for each $i$ with $1 \leq i \leq n-1$, there exists at least one proper essential subgroup $E_{i}$ of $G$ such that $h^{i} \in E_{i}^{c}$. But this shows that $\langle h \rangle \setminus \{1_{G}\} \subseteq \bigcup_{i=1}^{n-1} E_{i}^{c}$, that is, $\big(\bigcap_{i=1}^{n-1}E_{i}\big) \bigcap \langle h \rangle = \{1_{G}\}$. As finite intersection of proper essential subgroups of $G$ is again proper essential in $G$, we get that $\bigcap_{i=1}^{n-1} E_{i}$ is a proper essential subgroup of $G$ which intersects $\langle h \rangle$ trivially, a contradiction. 

Finally, if $g \in G$ be such that $g^{p} = 1_{G}$ for some prime $p$, then $g \in \delta(G)$. This shows that $\mathbbm{P}(G) \subseteq \delta(G)$. Now, as $\mathbbm{P}(G)$ is also proper essential in $G$ and $\delta(G)$ is the intersection of all proper essential subgroups of $G$, we get that $\delta(G) \subseteq \mathbbm{P}(G)$.
\end{proof}

We now show that for a Hamiltonian group $\mathbbm{H}$, $\mathrm{Soc}(\mathbbm{H})$ is the intersection of all proper essential subgroups of $\mathbbm{H}$.  

\begin{theorem}\label{debanandamj}
If $\mathbbm{H}$ is a Hamiltonian group, then $\delta(\mathbbm{H}) = \mathrm{Soc}(\mathbbm{H})$.
\end{theorem}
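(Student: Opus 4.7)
The plan is to prove $\delta(\mathbbm{H}) = \mathrm{Soc}(\mathbbm{H})$ by showing that both sides coincide with the subgroup $\mathbbm{P}(\mathbbm{H})$ of $\mathbbm{H}$ generated by its elements of prime order. This reduces the theorem to two independent identifications.

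First, I would identify $\mathrm{Soc}(\mathbbm{H})$ with $\mathbbm{P}(\mathbbm{H})$. Since $\mathbbm{H}$ is Hamiltonian, every subgroup of $\mathbbm{H}$ is normal, and hence the minimal normal subgroups of $\mathbbm{H}$ are exactly its minimal subgroups. Because $\mathbbm{H}$ is torsion (Hamiltonian groups being locally finite), every minimal subgroup is cyclic of prime order. Thus the join of all minimal normal subgroups is precisely the subgroup generated by all elements of prime order, which is $\mathbbm{P}(\mathbbm{H})$ by definition.

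Next, I would invoke Lemma \ref{spmkj}, which yields $\delta(G) = \mathbbm{P}(G)$ for any torsion group $G$ that possesses a proper essential subgroup. To apply it to $\mathbbm{H}$, I need to exhibit one such subgroup, and for this I would appeal to Theorem \ref{maa}: the center $\{\pm 1\}$ of $Q_{8}$ is contained in every nontrivial subgroup of $Q_{8}$, so it is proper essential there, and Theorem \ref{maa} therefore guarantees that the direct sum $\mathbbm{H} = Q_{8} \oplus B \oplus D$ also admits a proper essential subgroup. Lemma \ref{spmkj} then gives $\delta(\mathbbm{H}) = \mathbbm{P}(\mathbbm{H})$, and combining with the previous step finishes the proof.

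The only substantive verification is the existence of a proper essential subgroup of $\mathbbm{H}$; this is where the decomposition and the special subgroup lattice of $Q_{8}$ enter, and it reduces cleanly via Theorem \ref{maa} to the observation about $\{\pm 1\} \subseteq Q_{8}$. Everything else is an immediate consequence of the Dedekind property of $\mathbbm{H}$ and the lemmas already established, so I do not anticipate any further obstacle.
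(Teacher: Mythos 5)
Your proof is correct, and it shares the paper's skeleton (existence of a proper essential subgroup of $\mathbbm{H}$ via Theorem \ref{maa} applied to the center of $Q_{8}$, then Lemma \ref{spmkj}) but diverges in how the socle enters. The paper never identifies $\mathrm{Soc}(\mathbbm{H})$ explicitly at this stage: it proves the two inclusions directly, arguing that every minimal normal subgroup meets the proper essential subgroup $\delta(\mathbbm{H})$ nontrivially and hence lies inside it, and conversely that $\mathrm{Soc}(\mathbbm{H})$ is itself proper essential (every nontrivial subgroup of a Dedekind group contains a minimal normal subgroup), so $\delta(\mathbbm{H}) \subseteq \mathrm{Soc}(\mathbbm{H})$. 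You instead use the ``moreover'' clause of Lemma \ref{spmkj} to get $\delta(\mathbbm{H}) = \mathbbm{P}(\mathbbm{H})$ and pair it with the observation that in a torsion Dedekind group the minimal normal subgroups are exactly the subgroups of prime order, so $\mathrm{Soc}(\mathbbm{H}) = \mathbbm{P}(\mathbbm{H})$ as well. Your route is slightly more economical downstream: the identity $\mathrm{Soc}(\mathbbm{H}) = \mathbbm{P}(\mathbbm{H})$ nearly yields Theorem \ref{srmkrsna} directly (since $\mathbbm{P}(Q_{8}) = \{\pm 1\}$, $\mathbbm{P}(B) = B$, and $\mathbbm{P}$ visibly respects direct sums), bypassing the need for Theorem \ref{ons}; the paper's version, by contrast, keeps the argument purely in terms of essentiality and so generalizes more readily to settings where one does not yet know what $\delta$ is. You are also more explicit than the paper about why $Q_{8}$ has a proper essential subgroup, which is a genuine (if small) gap in the paper's own wording.
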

\begin{proof}
Let $\mathbbm{H} = Q_{8} \oplus B \oplus D$, where $B$ is an elementary abelian $2$-group, and $D$ is a torsion abelian group with all elements of odd order. Now, applying theorem \eqref{maa} we get that $\mathbbm{H}$ has a proper essential subgroup. Since $\mathbbm{H}$ is a torsion group, applying lemma \eqref{spmkj} we conclude that $\delta(\mathbbm{H})$ is a proper essential subgroup of $\mathbbm{H}$. If $N$ is any minimal normal subgroup of $\mathbbm{H}$, then $N \cap \delta(\mathbbm{H}) \neq \{1_{\mathbbm{H}}\}$ shows that $N \subseteq \delta(\mathbbm{H})$. As $N$ was arbitrary, we conclude that $\mathrm{Soc}(\mathbbm{H}) \subseteq \delta(\mathbbm{H})$. As any nontrivial subgroup $K$ of $\mathbbm{H}$ contains a minimal normal subgroup, we conclude that $\mathrm{Soc}(\mathbbm{H})$ is a proper essential subgroup of $\mathbbm{H}$. Hence, we get that $\delta(\mathbbm{H}) \subseteq \mathrm{Soc}(\mathbbm{H})$.
\end{proof}

\section{Proof of theorem 1.1.}

Applying theorem \eqref{ons} and \eqref{debanandamj}, we conclude that $\mathrm{Soc}(\mathbbm{H}) = \delta(Q_{8}) \oplus \delta(B) \oplus \delta(D)$. We claim that $\delta(B) = B$. Because, if $B$ possess a proper essential subgroup $E$, then, as all elements of $B$ have order $2$, we must have $b \in E$ for all $b \in B$. But this implies $E = B$. This shows that $B$ does not contain any proper essential subgroup. Since, we have $\delta(Q_{8}) \simeq \mathbb{Z}_{2}$, so, applying lemma \eqref{spmkj} we conclude that $\mathrm{Soc}(\mathbbm{H}) \simeq \mathbb{Z}_{2} \oplus B \oplus \mathbbm{P}(D)$.

\end{document}